\documentclass[12pt]{article}

\usepackage[margin=1in]{geometry}
\usepackage{amsmath,amssymb,amsthm}
\usepackage[authoryear,round]{natbib}
\usepackage{enumitem}
\usepackage{hyperref}
\usepackage{times}
\hypersetup{colorlinks=true,linkcolor=blue,citecolor=blue,urlcolor=blue}

\newtheorem{theorem}{Theorem}
\newtheorem{proposition}[theorem]{Proposition}

\newcommand{\E}{\mathbb{E}}
\newcommand{\bbP}{\mathbb{P}}

\newcommand{\R}{\mathbb{R}}
\newcommand{\one}{\mathbf{1}}

\title{Minimax unbiased estimation for finite populations with bounded outcomes}
\author{P. M. Aronow (Yale) and Patrick Lopatto (UNC)}
\date{\today}

\begin{document}
\maketitle

\begin{abstract}
We study design-unbiased estimation of the finite-population total $\sum_{i=1}^N y_i$ when each outcome satisfies known bounds \(y_i\in[a_i,b_i]\). For any sampling design with inclusion probabilities \(\pi_i>0\), we prove
a sharp lower bound on the worst-case squared error over the rectangular
parameter space. This bound is attained if and only if the unit inclusion
indicators are pairwise independent, in which case the minimax estimator is
the midpoint-differenced Horvitz--Thompson estimator
$\
\sum_{i=1}^N m_i+\sum_{i\in S}(y_i-m_i)/\pi_i,$ with
$m_i=(a_i+b_i)/{2}$.
We then solve the joint design-and-estimation problem under the constraint
\(\sum_i \pi_i\le n\). We find that a minimax strategy samples units independently
with probabilities $\pi_i^\ast=\min(1,c (b_i-a_i))$
where \(c>0\) is chosen so that \(\sum_i \pi_i^\ast=n\), and uses the
midpoint-differenced estimator. This extends \citet{Gabler1990}'s linear minimax result
to the full class of design-unbiased estimators. We also show that the
estimator is admissible among unbiased estimators and affine equivariant.
\end{abstract}

\section{Introduction}

We consider a finite population $y=(y_1, y_2, \dots , y_N)$ of real numbers and the problem of estimating the total $T(y) = \sum_{i=1}^N y_i$ using a subset $(y_i)_{i \in S}$, where the index set $S\subset \{1, \dots, N\} $ is chosen randomly using a known sampling distribution. 
This problem is classical and has been studied previously under various combinations of assumptions on the population and the sampling design. 

For a broad class of designs, \citet{Godambe1955} showed that, if no additional restrictions are placed on the $y_i$, no single unbiased linear estimator minimizes variance uniformly for all population values.  In a similar setting with unrestricted outcomes, \citet{GodambeJoshi1965} showed that the well-known Horvitz--Thompson estimator \citep{HorvitzThompson1952} is admissible among all unbiased estimators. Under the condition that the population variance is bounded by a known constant, \citet{BickelLehmann1981} showed that the sample mean is minimax with respect to expected squared error under simple random sampling, generalizing earlier work of  \citet{Aggarwal1959}. A variation in which each unit has an associated covariate was considered in \citet{ChengLi1983}. Further, \citet{joshi1979best} proved a minimax-type result for simple random sampling that applies to arbitrary convex loss functions.

In this note, we are interested in the case where each $y_i$ satisfies  $y_i \in [a_i , b_i]$ for some constants $a_i$ and $b_i$ known ex ante. 
When the bounds $a_i$ and $b_i$ are the same for all $i$, the minimax estimator under simple random sampling was obtained by \citet{hodges1982minimax}. When the bounds vary by index, exact minimax results are difficult to obtain. Conditional minimax estimators were investigated by \citet{gabler1988conditional}, and asymptotically minimax estimators were given by \citet{stenger1989asymptotic}. When $a_i = 0$ for all $i$ and the units are sampled independently with probability $\pi_i = n b_i / \sum_{i=1}^N b_i$, and $n$ is a fixed constant such that all $\pi_i \leq 1$, a minimax estimator with respect to squared error loss among all unbiased inhomogeneous linear estimators was derived by \citet{Gabler1990}. Gabler further shows that this strategy is minimax when restricted to unbiased inhomogeneous linear estimators. 

To the best of our knowledge, no minimax estimator among the class of design-unbiased estimators has previously been identified for bounded outcomes without imposing a fixed-size design restriction or restricting attention to linear estimators. We consider the case of arbitrary bounds $a_i$ and $b_i$ that may vary with the index $i$. We provide three results that give a relatively complete solution of this problem. 
The first, Theorem~\ref{thm:lower}, gives a lower bound on the expected squared error of any unbiased estimator. The second, Theorem~\ref{thm:sharpness}, shows that this lower bound is sharp exactly when the events for each unit included in the random sample are pairwise independent. The third, Theorem~\ref{thm:design}, derives a minimax unbiased statistical strategy under an expected sample size constraint. The resulting strategy is essentially equivalent to that of \citet{Gabler1990}'s strategy in the linear case. We further show that the associated estimator, a midpoint-differenced Horvitz-Thompson estimator, is (i) minimax among unbiased estimators when the sampling design is pairwise independent, (ii) admissible in the class of unbiased estimators, and  (iii) affine equivariant.  We state these results formally after introducing some notation.

\section{Setup and notation}

Assume $-\infty<a_i\le b_i<\infty$,  and let
\[
  \Theta=\prod_{i=1}^N [a_i,b_i],\qquad
  m_i=(a_i+b_i)/2,
  \qquad
  r_i=(b_i-a_i)/2.
\]
We suppose $r_i>0$ for all indices $i$ to avoid degenerate cases with known outcomes.

Define a sampling design as a probability distribution $p$ on subsets $s\subseteq \{1, \dots, N\}$.  Let $S$ denote the random sample and let $\pi_i=\bbP_p(i\in S)$, where $\bbP_p$ denotes probability with respect to $p$. We suppose that $\pi_i>0$ for all $i$.  An estimator is a collection of measurable functions $\delta_s \colon \Theta_s \rightarrow \R$, one for each possible sample, where $\Theta_s$ denotes the projection of $\Theta$ onto the indices contained in $s$.  We say an estimator is unbiased if for all $y \in \Theta$, 
\[
  \E_p\big[ \delta_S(y_S)\big]=T(y) , \qquad 
  y_{S} = (y_i)_{i \in S}.
\]
We evaluate estimators using squared error loss, defined as 
\[
  R(\delta,p;y)=\E_p\big [\big(\delta_S(y_S)-T(y)\big)^2\big ].
\]
We emphasize that all estimators we consider are deterministic functions of the sample $y_S$. 
Randomized estimators need not be considered separately, since conditioning on the sample and observed values cannot increase the  risk.

We define a generalized difference estimator 
\[
  \widehat T (y_S)
  =\sum_{i=1}^N m_i+
    \sum_{i\in S}\frac{y_i-m_i}{\pi_i},
\]
which is well-known in the literature \citep{CasselSarndalWretman1976,DevilleSarndal1992}. We refer to $\widehat T (y_S)$ as the midpoint-differenced Horvitz--Thompson estimator, using $\widehat T$ as shorthand.

\section{Main results}

We now state our three main theorems, deferring their proofs to the following section. The first provides a lower bound on the risk of any unbiased estimator.

\begin{theorem}
\label{thm:lower}
For any sampling design $p$ with all $\pi_i>0$ and any unbiased estimator $\delta$,
\[
  \sup_{y\in\Theta}R(\delta,p;y)
  \geq
  \sum_{i=1}^N r_i^2\left( \frac{1-\pi_i}{\pi_i}\right).
\]
\end{theorem}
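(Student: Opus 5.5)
Our plan is to bound the supremum from below by an average over the $2^N$ vertices of the rectangle $\Theta$, and to analyze unbiasedness at those vertices through a Fourier--Walsh expansion. Write each vertex as $y_i = m_i + r_i\epsilon_i$ with $\epsilon\in\{-1,1\}^N$, and let $\epsilon$ be uniformly distributed, independent of $S$. Then
\[
\sup_{y\in\Theta} R(\delta,p;y) \ge \E_\epsilon\, \E_p\big[(\delta_S(y_S)-T(y))^2\big],
\]
so it suffices to lower bound the average risk over vertices. For each sample $s$, the restriction of $\delta_s$ to vertices is a function $g_s(\epsilon_s)$ of the coordinates $(\epsilon_i)_{i\in s}$. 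We expand it in the Walsh basis, $g_s(\epsilon_s)=\sum_{A\subseteq s}\hat g_s(A)\,\epsilon^A$ with $\epsilon^A=\prod_{i\in A}\epsilon_i$, and likewise $T(y)=\sum_i m_i+\sum_i r_i\epsilon_i$.

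Unbiasedness at every vertex says that $\sum_s p(s)\,g_s(\epsilon_s)=\sum_i m_i+\sum_i r_i\epsilon_i$ as functions on $\{-1,1\}^N$. Matching Walsh coefficients gives, for each singleton,
\[
\sum_{s\ni i} p(s)\,\hat g_s(\{i\}) = r_i .
\]
By Parseval, for each fixed $s$,
\[
\E_\epsilon\big(g_s-T\big)^2 \ge \sum_{i\in s}\big(\hat g_s(\{i\})-r_i\big)^2 + \sum_{i\notin s} r_i^2 .
\]
Here we drop the constant coefficient and all coefficients on sets of size at least two, since these terms are nonnegative. The units $i\notin s$ contribute $r_i^2$ because $g_s$ does not depend on $\epsilon_i$.

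Averaging over $S\sim p$ and exchanging the order of summation, the bound splits across units:
\[
\sum_i\Big[(1-\pi_i)r_i^2+\sum_{s\ni i}p(s)\big(\hat g_s(\{i\})-r_i\big)^2\Big].
\]
For each $i$ we minimize the bracket subject to the linear constraint $\sum_{s\ni i}p(s)\,\hat g_s(\{i\})=r_i$. Writing $c_s=\hat g_s(\{i\})$, the conditional mean of $c_S$ given $i\in S$ equals $r_i/\pi_i$. By Jensen's inequality (Cauchy--Schwarz), $\sum_{s\ni i}p(s)(c_s-r_i)^2 \ge \pi_i\,(r_i/\pi_i-r_i)^2 = r_i^2(1-\pi_i)^2/\pi_i$. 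Adding $(1-\pi_i)r_i^2$ gives
\[
(1-\pi_i)r_i^2\Big[1+\frac{1-\pi_i}{\pi_i}\Big]=r_i^2\,\frac{1-\pi_i}{\pi_i},
\]
which is the claimed bound after summing over $i$.

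The main point needing care is that unbiasedness is only used on the vertex set, which is legitimate because the estimator is assumed unbiased on all of $\Theta$. Two further facts must be checked. First, the Walsh expansion of each $g_s$ uses only the coordinates in $s$, which is what makes the $i\notin s$ terms exactly $r_i^2$. Second, measurability and finiteness pose no issue, since only finitely many function values enter. If the risk is infinite somewhere, the inequality is trivial.
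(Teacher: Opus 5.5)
Your proof is correct and is essentially the paper's argument. Both average the risk over the vertices under a uniform sign prior, use unbiasedness to pin the first-order Walsh coefficients through $\sum_{s\ni i}p(s)\hat g_s(\{i\})=r_i$, and discard the higher-order terms. The paper writes your per-sample Parseval and Jensen steps in coordinate-free form, as Bessel's inequality for the orthogonal family $U_i=I_i\varepsilon_i$ (with $\langle D,U_i\rangle=r_i$ and $\|U_i\|_2^2=\pi_i$) combined with the identity $\bar R=\E[D^2]-\E[F^2]$.
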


For a general design, write
\[
  I_i=\one(i\in S),\quad
  \pi_{ij}=\bbP_p(I_i=1,I_j=1),\quad
  \Delta_{ij}=\pi_{ij}-\pi_i\pi_j, \quad   D_\pi=\sum_{i=1}^N r_i^2\left(\frac{1-\pi_i}{\pi_i}\right).
\]
The inclusion indicators are pairwise independent if $\Delta_{ij}=0$ for all $i\ne j$. Our second theorem shows that the lower bound  from the previous theorem is sharp if and only if this condition holds.

\begin{theorem}
\label{thm:sharpness}
Assume $r_i>0$ for all $i$.  Fix a design $p$ with $\pi_i>0$ for all $i$.
The following statements are equivalent:
\begin{enumerate}[label=\textup{(\roman*)}]
\item there exists an unbiased estimator $\delta$ with
\[
  \sup_{y\in\Theta}R(\delta,p;y)=D_\pi;
\]
\item the midpoint-differenced estimator $\widehat T$ satisfies
\[
  \sup_{y\in\Theta}R(\widehat T,p;y)=D_\pi;
\]
\item $\Delta_{ij}=0$ for every $i\ne j$.
\end{enumerate}
\end{theorem}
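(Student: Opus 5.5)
The plan is to prove (iii)$\Rightarrow$(ii)$\Rightarrow$(i)$\Rightarrow$(iii). Only the last implication needs real work. Throughout, set $z_i=y_i-m_i\in[-r_i,r_i]$. Since $\E_p[I_i]=\pi_i$, we can write
\[
  \widehat T-T(y)=\sum_{i=1}^N\Bigl(\frac{I_i}{\pi_i}-1\Bigr)z_i ,
\]
which shows in particular that $\widehat T$ is unbiased. Expanding the square and using $\E_p[(I_i/\pi_i-1)(I_j/\pi_j-1)]=\Delta_{ij}/(\pi_i\pi_j)$ for $i\neq j$ gives
\[
  R(\widehat T,p;y)=\sum_{i=1}^N z_i^2\,\frac{1-\pi_i}{\pi_i}+\sum_{i\neq j} z_iz_j\,\frac{\Delta_{ij}}{\pi_i\pi_j}.
\]

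\textbf{(iii)$\Rightarrow$(ii).} If every $\Delta_{ij}=0$, the cross terms vanish. The remaining sum is maximized at $z_i=\pm r_i$, so $\sup_{y\in\Theta} R(\widehat T,p;y)=D_\pi$.

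\textbf{(ii)$\Rightarrow$(i).} This is immediate: take $\delta=\widehat T$.

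\textbf{(i)$\Rightarrow$(iii).} I would restrict attention to the vertices $y=m+\varepsilon\circ r$ with $\varepsilon\in\{-1,1\}^N$, and average the risk over $\varepsilon$ drawn uniformly at random.

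First, the risk at a vertex depends only on the values of $\delta_s$ at vertices of $\Theta_s$. For each $s$ in the support of $p$, expand $\delta_s$ on these vertices in the Walsh basis:
\[
  \delta_s(\varepsilon_s)=\sum_{A\subseteq s}c_{s,A}\,\varepsilon^A,\qquad \varepsilon^A=\prod_{i\in A}\varepsilon_i .
\]
The target has the expansion $T=\sum_i m_i+\sum_i r_i\varepsilon_i$; call its coefficients $t_A$.

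Unbiasedness at every vertex is an identity between multilinear functions on the cube. Matching coefficients gives, for each $A$,
\[
  \sum_{s\supseteq A}p(s)\,c_{s,A}=t_A .
\]
By Parseval, the averaged risk equals
\[
  \sum_s p(s)\sum_A\bigl(c_{s,A}-t_A\bigr)^2 ,
\]
where $c_{s,A}:=0$ whenever $A\not\subseteq s$. This objective splits over $A$, so we can minimize each piece separately.

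For $A=\{i\}$, the piece is $\sum_{s\ni i}p(s)(c_{s,i}-r_i)^2+(1-\pi_i)r_i^2$, subject to $\sum_{s\ni i}p(s)c_{s,i}=r_i$. By Cauchy--Schwarz (equivalently, convexity), its minimum is $r_i^2(1-\pi_i)/\pi_i$. The minimum is attained only when $c_{s,i}=r_i/\pi_i$ for every $s\ni i$ with $p(s)>0$.

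For $A=\emptyset$ and for $|A|\ge 2$, each piece is nonnegative. It is zero exactly when $c_{s,\emptyset}=\sum_i m_i$ and $c_{s,A}=0$, and these choices satisfy the constraints.

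Hence the averaged risk is at least $D_\pi$, which reproves Theorem~\ref{thm:lower}. Equality holds if and only if $\delta$ coincides with $\widehat T$ at every vertex, for every $s$ in the support of $p$.

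Now suppose $\sup_{y\in\Theta} R(\delta,p;y)=D_\pi$. Then the vertex average is at most $D_\pi$, so it equals $D_\pi$, and $\delta=\widehat T$ at vertices. Consequently $R(\widehat T,p;\cdot)\le D_\pi$ at every vertex. The average of $R(\widehat T,p;\cdot)$ over vertices is also $D_\pi$, because the cross terms average to zero. Therefore $R(\widehat T,p;\cdot)$ is constant and equal to $D_\pi$ on all vertices.

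By the display above, this means that the function
\[
  \varepsilon\mapsto\sum_{i\neq j}\varepsilon_i\varepsilon_j\,\frac{r_ir_j\Delta_{ij}}{\pi_i\pi_j}
\]
vanishes identically on $\{-1,1\}^N$. A multilinear polynomial that vanishes on the cube has all coefficients zero. Since $r_i>0$ for every $i$, this forces $\Delta_{ij}=0$ for all $i\ne j$.

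The main obstacle is making the Walsh/Parseval step rigorous for arbitrary measurable $\delta_s$. The key point is that only vertex values enter both the unbiasedness identities used and the vertex risks. Any function on a finite cube has an exact multilinear expansion, so no regularity of $\delta_s$ is needed.
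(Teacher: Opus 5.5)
Your proposal is correct and follows essentially the paper's route. You restrict to vertices under the uniform sign prior, show the averaged risk is at least $D_\pi$ with equality forcing $\delta_s=\widehat T_s$ on vertices for every $s$ in the support of $p$, deduce that every vertex risk of $\widehat T$ equals $D_\pi$, and kill the cross terms by Walsh orthogonality. The only difference is bookkeeping: you expand each $\delta_s$ fully in the Walsh basis and minimize level by level under coefficient-matched unbiasedness constraints, while the paper projects $D$ onto the span of $U_i=I_i\varepsilon_i$ and uses equality in Bessel's inequality, which is the same degree-one computation in compressed form.
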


Our final main theorem concerns the simultaneous choice of both the design and the estimator. We generalize a result of \citet{Gabler1990}, who showed an analogous result for the family of unbiased inhomogeneous linear estimators. We consider the optimal strategy under the constraint that the expected sample size satisfies  $\E_p [|S|]=\sum_{i=1}^N\pi_i\leq n$, where $0<n< N$ is a parameter. (The case $n=N$ is trivial.) Define
\[
  V_n =
  \min_{0<\pi_i\leq 1,\,\sum_{i=1}^N\pi_i\leq n}
  \sum_{i=1}^Nr_i^2 \left( \frac{1-\pi_i}{\pi_i}\right).
\]

\begin{theorem}
\label{thm:design}
Among all pairs $(p,\delta)$ in which $p$ is a sampling design such that $\pi_i >0$ for all $i$ and  $\E_p[|S|] \leq n$, and $\delta$  is unbiased,
\[
  \inf_{p,\delta}\sup_{y\in\Theta}R(\delta,p;y)=V_n.
\]
The value $V_n $ is attained by using the estimator $\widehat T$ and sampling each unit independently with inclusion probabilities 
\[
  \pi_i^*=\min(1,cr_i),
\]
where the constant $c>0$ is chosen so that $\sum_{i=1}^N\pi_i^*=n$.
\end{theorem}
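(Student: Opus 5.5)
The argument splits into a lower bound for every admissible pair and a matching upper bound for the proposed strategy.

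\emph{Lower bound.} Let $(p,\delta)$ be any pair with $\pi_i>0$, $\E_p[|S|]=\sum_i\pi_i\le n$, and $\delta$ unbiased. By Theorem~\ref{thm:lower},
\[
  \sup_{y\in\Theta}R(\delta,p;y)\ge D_\pi .
\]
Each $\pi_i$ lies in $(0,1]$ and $\sum_i\pi_i\le n$, so the vector $(\pi_i)$ is feasible for the minimization defining $V_n$. Hence $D_\pi\ge V_n$, and therefore $\inf_{p,\delta}\sup_y R\ge V_n$. Before this step I would check that the minimum in $V_n$ is attained. The objective $f(\pi)=\sum_i r_i^2(1/\pi_i-1)$ is continuous and strictly convex on $(0,1]^N$, and it tends to $+\infty$ as any $\pi_i\downarrow0$. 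So the minimization may be restricted to a compact set $\pi_i\ge\varepsilon$, on which a minimizer exists.

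\emph{Identifying the minimizer.} Because $f$ is strictly decreasing in each coordinate and $n<N$, the budget constraint binds at the optimum. If $\sum_i\pi_i<n$, then some $\pi_i<1$ could be increased, lowering $f$. The problem is convex with linear constraints, so the KKT conditions are necessary and sufficient. With a multiplier $\lambda>0$ for the budget and multipliers for $\pi_i\le1$, they give:
\begin{itemize}
\item $r_i^2/\pi_i^2=\lambda$, i.e. $\pi_i=r_i/\sqrt\lambda$, whenever $\pi_i<1$;
\item $r_i^2\ge\lambda$ whenever $\pi_i=1$.
\end{itemize}
Setting $c=1/\sqrt\lambda$, these conditions are exactly $\pi_i=\min(1,cr_i)$.

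The map $c\mapsto\sum_i\min(1,cr_i)$ is continuous and nondecreasing. It runs from $0$ as $c\to0$ to $N$ for large $c$, and it is strictly increasing while its value is below $N$. So there is a unique $c>0$ with $\sum_i\pi_i^*=n$. One can also verify directly that $\pi^*$ satisfies KKT with $\lambda=1/c^2$: for units with $\pi_i^*=1$ we have $cr_i\ge1$, which gives $r_i^2\ge\lambda$. By convexity $\pi^*$ is optimal, so $f(\pi^*)=V_n$.

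\emph{Attainment.} Take $p^*$ to be independent Bernoulli sampling with probabilities $\pi_i^*$. Then $\pi_i^*>0$ because $r_i>0$, and $\E_{p^*}[|S|]=n$. The inclusion indicators are mutually, hence pairwise, independent, so $\Delta_{ij}=0$. By Theorem~\ref{thm:sharpness}, (iii)$\Rightarrow$(ii), $\widehat T$ is unbiased and
\[
  \sup_y R(\widehat T,p^*;y)=D_{\pi^*}=V_n .
\]
Combined with the lower bound, this proves the equality and shows that the strategy attains $V_n$.

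The only step needing care is the convex-optimization part: justifying existence of the minimizer near the boundary $\pi_i\to0$, showing the budget binds, and handling units capped at $1$ in the KKT conditions. The rest follows directly from Theorems~\ref{thm:lower} and~\ref{thm:sharpness}.
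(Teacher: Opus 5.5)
Your proposal is correct and follows essentially the same route as the paper: the lower bound from Theorem~\ref{thm:lower}, minimization of $D_\pi$ over feasible inclusion vectors with multiplier $\lambda=c^{-2}$ yielding $\pi_i^*=\min(1,cr_i)$, and attainment by independent Bernoulli sampling with $\widehat T$. The only difference is that the paper certifies optimality of $\pi^*$ by hand, via the supporting-line inequality $r_i^2/\pi_i-r_i^2/\pi_i^*\ge-\lambda(\pi_i-\pi_i^*)$, rather than citing KKT theory; since sufficiency at $\pi^*$ alone suffices, your compactness/existence and KKT-necessity steps are not needed.
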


We also include two supplementary propositions. The first shows that $\widehat T$ is admissible in the class of unbiased estimators, even when pairwise independence does not hold.

\begin{proposition}\label{p:admissible}
Fix a sampling design $p$ with $\pi_i>0$ for every $i$. The midpoint-differenced estimator $\widehat T$ 
is admissible in the class of unbiased estimators under squared-error loss.
\end{proposition}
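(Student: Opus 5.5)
The plan is the Godambe--Joshi-type argument: show that any unbiased estimator whose risk is everywhere no larger than that of $\widehat T$ must coincide with $\widehat T$ on every sample of positive probability. Then the risks are identical, so strict improvement is impossible.

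Suppose $\delta$ is unbiased and $R(\delta,p;y)\le R(\widehat T,p;y)$ for all $y\in\Theta$. Write $\delta_s=\widehat T(y_s)+h_s(y_s)$. Since both estimators are unbiased, $\E_p[h_S(y_S)]=0$ for every $y\in\Theta$. Expanding the square gives
\[
R(\delta,p;y)=R(\widehat T,p;y)+\E_p\big[h_S(y_S)^2\big]+2\,\E_p\big[h_S(y_S)\big(\widehat T(y_S)-T(y)\big)\big],
\]
so the dominance assumption reads
\[
\E_p[h_S^2]+2\,\E_p\big[h_S(\widehat T-T)\big]\le 0\quad\text{for every } y.
\]

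For $y\in\Theta$, let $A(y)=\{i:y_i\neq m_i\}$. I will prove by induction on $k=|A(y)|$ that $h_s(y_s)=0$ for every $s$ with $p(s)>0$.

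Fix $y$ with $|A(y)|=k$, write $A=A(y)$, and assume the claim holds for all points with fewer than $k$ non-midpoint coordinates.

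\emph{Samples not containing $A$.} If $p(s)>0$ and $s\not\supseteq A$, then $y_s$ agrees with the projection of the point $y'$ defined by $y'_i=y_i$ for $i\in s$ and $y'_i=m_i$ otherwise. Since $|A(y')|<k$, the induction hypothesis gives $h_s(y_s)=0$. In the base case $k=0$ there are no such samples, since every $s$ contains $A=\emptyset$.

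\emph{Samples containing $A$.} Here the key observation is that on every $s\supseteq A$,
\[
\widehat T(y_s)-T(y)=\sum_{i\in A}(y_i-m_i)\Big(\tfrac{1}{\pi_i}-1\Big)=:c(y),
\]
which does not depend on $s$. Combining this with the previous step:
\begin{itemize}
\item unbiasedness of $h$ reduces to $\sum_{s\supseteq A}p(s)h_s(y_s)=0$;
\item therefore $\E_p[h_S(\widehat T-T)]=c(y)\sum_{s\supseteq A}p(s)h_s(y_s)=0$.
\end{itemize}
The dominance inequality then becomes $\E_p[h_S(y_S)^2]\le 0$. Hence $h_s(y_s)=0$ for all $s$ with $p(s)>0$, which completes the induction.

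Consequently $\delta_S=\widehat T$ almost surely under $p$ for every $y$, so $R(\delta,p;\cdot)\equiv R(\widehat T,p;\cdot)$ and no unbiased estimator strictly dominates $\widehat T$.

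The main obstacle is the cross term, which does not vanish in general. The induction is organized exactly so that only samples containing $A$ contribute, and on those samples $\widehat T-T$ is constant. The one point needing care is that $h_s$ depends only on $y_s$, so the induction hypothesis applies through the auxiliary point $y'$. No pairwise independence is needed, consistent with the statement.
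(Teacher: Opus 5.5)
Your proof is correct, and it takes a genuinely different route from the paper's.

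\textbf{The paper's route.} The paper argues through Bayes risk. It chooses a product prior $\mu$ on $\Theta$ whose coordinates have mean $m_i$, positive variance $\sigma_i^2$, and an atom at $y^0_i$, the point of strict improvement. It then repeats the Bessel-inequality projection from the proof of Theorem~\ref{thm:lower}, now with $U_i=I_i(Y_i-m_i)$. This shows every unbiased estimator has $\mu$-Bayes risk at least $\sum_i\sigma_i^2(1/\pi_i-1)$, and $\widehat T$ attains this bound. A dominating $\delta$ would beat this minimum, because $\mu(\{y^0\})>0$.

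\textbf{Your route.} You work pointwise in $y$, by induction on $|A(y)|$. Two facts carry the argument, and you identify both. First, $h_s$ depends on $y$ only through $y_s$, so the induction hypothesis transfers to the projected point $y'$. Second, $\widehat T_s(y_s)-T(y)$ equals the same constant $c(y)$ on every sample $s\supseteq A(y)$, so unbiasedness kills the cross term. The base case holds ($c(m)=0$), the case where no positive-probability sample contains $A$ is handled by the induction step, and the risk expansion is valid. Since $p$ lives on finitely many subsets, every expectation at fixed $y$ is a finite sum, so no integrability issues arise.

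\textbf{What each approach buys.} Your approach is elementary: no priors, no $L^2$ space, no measurability bookkeeping. It also gives a stronger conclusion. Any unbiased estimator whose risk is everywhere at most that of $\widehat T$ must equal $\widehat T$ on every positive-probability sample at every $y$, so $\widehat T$ is admissible in a uniqueness sense. The paper's approach reuses the Theorem~\ref{thm:lower} machinery. It also yields the additional fact that $\widehat T$ is Bayes among unbiased estimators for every midpoint-centered product prior on $\Theta$.
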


The second shows that $\widehat T$ is equivariant under affine transformations. 

\begin{proposition}\label{p:equivariant}
Fix a sampling design $p$ with inclusion probabilities $\pi_i>0$. Let
$\lambda>0$ and $c_1,\ldots,c_N\in\mathbb R$. Define the transformed
population values
\[
y_i^{\lambda,c}=\lambda y_i+c_i
\]
and the transformed parameter space
\[
\Theta^{\lambda,c}
=
\prod_{i=1}^N[\lambda a_i+c_i,\lambda b_i+c_i].
\]
Let $\widehat T^{\lambda,c}$ denote the difference estimator in the transformed
problem, computed using the same sampling design $p$. Then, for every
$y\in\Theta$ and every sample $s$,
\[
\widehat T^{\lambda,c}_s(y_s^{\lambda,c})
=
\lambda \widehat T_s(y_s)+\sum_{i=1}^N c_i.
\]
\end{proposition}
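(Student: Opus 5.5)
This is a direct computation from the definition of $\widehat T$. We first work out how the midpoint and the inclusion probabilities change under the transformation, and then substitute these into the estimator.

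\emph{Transformed quantities.} The design $p$ is unchanged, so the inclusion probabilities $\pi_i$ are the same in the transformed problem. Since $\lambda>0$, the interval $[\lambda a_i+c_i,\lambda b_i+c_i]$ is correctly ordered. Its midpoint is
\[
m_i^{\lambda,c}=\frac{\lambda a_i+c_i+\lambda b_i+c_i}{2}=\lambda m_i+c_i .
\]
(Its half-width is $\lambda r_i>0$, so the nondegeneracy assumption is preserved, although the estimator does not use it.)

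\emph{Substitution.} For any $y\in\Theta$ and any sample $s$,
\[
\widehat T^{\lambda,c}_s(y^{\lambda,c}_s)=\sum_{i=1}^N(\lambda m_i+c_i)+\sum_{i\in s}\frac{(\lambda y_i+c_i)-(\lambda m_i+c_i)}{\pi_i}.
\]
In each summand of the second sum the shifts $c_i$ cancel, which leaves $\lambda(y_i-m_i)/\pi_i$. The first sum splits as $\lambda\sum_i m_i+\sum_i c_i$. Collecting the terms multiplied by $\lambda$ gives $\lambda\widehat T_s(y_s)+\sum_{i=1}^N c_i$, which is the claim.

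There is no real obstacle here. The one point to state explicitly is that the positivity of $\lambda$ is what makes the transformed bounds correctly ordered and their midpoint equal to $\lambda m_i+c_i$. With $\lambda<0$ the endpoints would swap, but the midpoint formula would still come out the same.
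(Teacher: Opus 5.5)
Your proposal is correct and matches the paper's proof: compute the transformed midpoint $m_i^{\lambda,c}=\lambda m_i+c_i$, substitute it into the definition with the unchanged $\pi_i$, and note that the shifts $c_i$ cancel in the sampled terms. The paper also remarks that $T(y^{\lambda,c})=\lambda T(y)+\sum_i c_i$, which is what makes the identity an equivariance statement, but this remark is not needed for the displayed identity.
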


\section{Discussion}

This paper shows that known bounds on finite-population outcomes alter the classical design-unbiased estimation problem in a sharp and constructive way. The bounds identify a known midpoint total that can be used in a differenced Horvitz--Thompson estimator. We show that this is exactly minimax among all design-unbiased estimators, including nonlinear estimators, under pairwise-independent sampling. The proof clarifies why this is so. On the vertices of the parameter rectangle, unbiasedness fixes the first-order components needed to estimate the total, while higher-order components cannot reduce the required second moment. Thus the worst-case risk lower bound is unavoidable, and the difference estimator attains it precisely when second-order inclusion covariances vanish.

The design result gives a corresponding minimax strategy when the sampling design is also chosen subject to an expected sample size constraint. The optimal inclusion probabilities are proportional to the outcome radii, with truncation at one, and independent sampling with these probabilities attains the minimax value. This characterizes how sampling effort should be allocated when units differ in the width of their known outcome bounds: units with larger possible deviations from their midpoint receive higher inclusion probabilities. Together, the lower bound, sharpness characterization, and optimal-design theorem provide an exact finite-sample theory for minimax design-unbiased estimation over rectangular parameter spaces. They also separate the setting treated here from fixed-size designs, where dependence among inclusion indicators generally prevents attainment of the same universal lower bound.

\section{Proofs}

We begin with the proof of our first theorem.

\begin{proof}[Proof of Theorem~\ref{thm:lower}]
It suffices to lower-bound $R$ over the vertices of $\Theta$.
Write
\[
  y_i=m_i+r_i\varepsilon_i,
  \qquad \varepsilon_i\in\{-1,1\},
\]
and let 
$\varepsilon=(\varepsilon_1,\ldots,\varepsilon_N)$ be uniformly distributed independently of the
sampling design.  Recall that  $S$ denotes the random sample and we defined the indicator functions 
$I_i=\mathbf 1_{\{i\in S\}}$.  Define
\[
  M=\sum_{i=1}^Nm_i,
  \qquad
  F(\varepsilon)=T(y)-M=\sum_{i=1}^Nr_i\varepsilon_i.
\]
For  each sample $s$, write
\[
  \varepsilon_s=(\varepsilon_i)_{i\in s},
  \quad
  m_s=(m_i)_{i\in s},
  \quad
  r_s=(r_i)_{i\in s},
\quad
  m_s+r_s\varepsilon_s
  =
  (m_i+r_i\varepsilon_i)_{i\in s}.
\]
 Set
\[
  D_s(\varepsilon_s)=\delta_s(m_s+r_s\varepsilon_s)-M, \qquad 
  D=D_S(\varepsilon_S).
\]

Let $\bar R(\delta,p)$ be the Bayes risk under the uniform prior on the
vertices.  By assumption, $\bar R(\delta,p)<\infty$, and $F$ and $D$ have finite second moments.

Unbiasedness implies that, for every realization of the full sign vector $\varepsilon$,
\begin{equation}\label{e:1}
  \E_S[D\mid \varepsilon]=F(\varepsilon).  
\end{equation}
Therefore
\[
  \bar R(\delta,p)
  =\E_{\varepsilon,S}\big[ D-F(\varepsilon)\big]^2  =\E_{\varepsilon,S}[D^2]-\E_\varepsilon \big[ F(\varepsilon)^2\big],
\]
because
\[
  \E_{\varepsilon,S}\big [D F(\varepsilon)\big ]
  =
  \E_\varepsilon\!\big[
    F(\varepsilon)\E_S[D\mid\varepsilon]
  \big]
  =
  \E_\varepsilon \big[ F(\varepsilon)^2 \big].
\]

We now lower-bound $\E [ D^2]$ by projecting $D$ onto the span of the  observed
sign variables.  Work in the Hilbert space
$L^2(\mathbb P_p\otimes \mathrm{Unif}\{-1,1\}^N)$ and define $
  U_i=I_i\varepsilon_i$. 
For $i\neq j$,
\[
  \langle U_i,U_j\rangle
  =
  \E[I_iI_j\varepsilon_i\varepsilon_j]
  =
  \E[I_iI_j]\E[\varepsilon_i\varepsilon_j]
  =
  0,
\]
so the variables $U_i$ are mutually orthogonal.  Also
\[
  \|U_i\|_2^2=\E [I_i]=\pi_i .
\]

We next compute the correlations of $D$ with these directions.  By \eqref{e:1},
\[
  \E[D\varepsilon_i]
  =
  \E\big [\varepsilon_i\E_S[D\mid\varepsilon]\big]
  =
  \E\big[\varepsilon_iF(\varepsilon)\big]
  =
  r_i .
\]
Moreover,
\[
\begin{aligned}
  \E[D(1-I_i)\varepsilon_i]
  &=
  \sum_{s\not\ni i}p(s)\,
  \E_\varepsilon\big[
    D_s(\varepsilon_s)\varepsilon_i
  \big]
  =
  0.
\end{aligned}
\]
Indeed, if $s\not\ni i$, then $D_s(\varepsilon_s)$ depends only on the
observed signs $\{\varepsilon_j:j\in s\}$, which are independent of
$\varepsilon_i$, and $\E[\varepsilon_i]=0$.  Hence
\[
  \langle D,U_i\rangle
  =
  \E[D I_i\varepsilon_i]
  =
  \E[D\varepsilon_i]
   -\E\big[D(1-I_i)\varepsilon_i\big]
  =
  r_i .
\]

By Bessel's inequality applied to the orthogonal family
$\{U_i\}_{i=1}^N$,
\[
  \E [D^2]
  =
  \|D\|_2^2
  \geq
  \sum_{i=1}^N
  \frac{\langle D,U_i\rangle^2}{\|U_i\|_2^2}
  =
  \sum_{i=1}^N \frac{r_i^2}{\pi_i}.
\]
Finally,
\[
  \E \big[ F(\varepsilon)^2 \big]
  =
  \E\left(\sum_{i=1}^Nr_i\varepsilon_i\right)^2
  =
  \sum_{i=1}^Nr_i^2.
\]
Therefore
\[
  \bar R(\delta,p)
  \geq
    \sum_{i=1}^N \frac{r_i^2}{\pi_i}-  \sum_{i=1}^Nr_i^2
  =
  \sum_{i=1}^N  r_i^2 \left( \frac{1-\pi_i}{\pi_i} \right).
\]
Since the supremum risk over $\Theta$ is at least the average risk over the
vertices, the claimed lower bound follows.
\end{proof}

Before proceeding to the proof of the second theorem, we require the following proposition.

\begin{proposition}
\label{thm:pairwise}
Suppose that $\Delta_{ij}=0$ for every pair $i\ne j$. Then
\[
  \inf_{\delta}
  \sup_{y\in\Theta}R(\delta,p;y)
  =
  \sum_{i=1}^N r_i^2 \left( \frac{1-\pi_i}{\pi_i}\right),
\]
where the infimum is over all unbiased estimators, and the infimum is attained by $\widehat T$.
\end{proposition}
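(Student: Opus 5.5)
The lower bound $\inf_\delta\sup_{y\in\Theta}R(\delta,p;y)\ge D_\pi$ is exactly Theorem~\ref{thm:lower}. It holds for every design with $\pi_i>0$, so it holds in particular under pairwise independence. What remains is to show that $\widehat T$ is unbiased and that its risk is at most $D_\pi$ for every $y\in\Theta$. Together with the lower bound, this gives equality of the infimum with $D_\pi$ and shows that $\widehat T$ attains it.

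\textbf{Unbiasedness.} Write $\widehat T-T(y)=\sum_{i=1}^N (y_i-m_i)\left(\frac{I_i}{\pi_i}-1\right)$. Since $\E_p[I_i]=\pi_i$, every term has mean zero, so $\E_p[\widehat T]=T(y)$ for all $y\in\Theta$.

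\textbf{Risk computation.} Set $d_i=y_i-m_i$, so that $|d_i|\le r_i$ on $\Theta$. Expanding the square of the sum and using $\operatorname{Cov}(I_i,I_j)=\Delta_{ij}$ for $i\ne j$ and $\operatorname{Var}(I_i)=\pi_i(1-\pi_i)$, we get
\[
R(\widehat T,p;y)=\sum_{i=1}^N d_i^2\,\frac{1-\pi_i}{\pi_i}+\sum_{i\ne j} d_id_j\,\frac{\Delta_{ij}}{\pi_i\pi_j}.
\]
Under the hypothesis $\Delta_{ij}=0$, the cross terms vanish. Each remaining coefficient $(1-\pi_i)/\pi_i$ is nonnegative, so the risk is at most $\sum_i r_i^2(1-\pi_i)/\pi_i=D_\pi$. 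This value is attained at any vertex, where $|d_i|=r_i$ for all $i$. Hence $\sup_{y\in\Theta}R(\widehat T,p;y)=D_\pi$, which matches the bound from Theorem~\ref{thm:lower}.

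There is no real obstacle in this argument. The only points needing care are the sign bookkeeping in the expansion, namely that $\E[(I_i/\pi_i-1)(I_j/\pi_j-1)]=\Delta_{ij}/(\pi_i\pi_j)$, and the observation that the diagonal coefficients are nonnegative because $\pi_i\le 1$.
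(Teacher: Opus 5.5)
Your proposal is correct and follows essentially the same route as the paper: it combines the lower bound of Theorem~\ref{thm:lower} with the explicit risk expansion of $\widehat T$, whose cross terms $\Delta_{ij}/(\pi_i\pi_j)$ vanish under pairwise independence, so the supremum $D_\pi$ is attained at the vertices. You also explicitly check unbiasedness and the nonnegativity of $(1-\pi_i)/\pi_i$, small steps the paper leaves implicit.
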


\begin{proof}
 The error of $\widehat T$ is
\[
  \widehat T-T(y)
  =\sum_{i=1}^N\left(\frac{I_i}{\pi_i}-1\right)z_i, \qquad z_i=y_i-m_i.
\]
The previous display implies that for an arbitrary design,
\[
\begin{aligned}
  R(\widehat T,p;y)
  &=\sum_{i=1}^N\frac{1-\pi_i}{\pi_i}z_i^2
    +2\sum_{i<j}\frac{\Delta_{ij}}{\pi_i\pi_j}z_i z_j .
\end{aligned}
\]
Under the pairwise-independence assumption, this reduces to
\[
  R(\widehat T,p;y)
  =\sum_{i=1}^N\frac{1-\pi_i}{\pi_i}(y_i-m_i)^2 .
\]
Maximizing over $\Theta$ gives
\[
  \sup_{y\in\Theta}R(\widehat T,p;y)
  =\sum_{i=1}^Nr_i^2\frac{1-\pi_i}{\pi_i},
\]
which matches Theorem~\ref{thm:lower}.
\end{proof}

\begin{proof}[Proof of Theorem~\ref{thm:sharpness}]
Statement \textup{(iii)} implies \textup{(ii)} by
Proposition~\ref{thm:pairwise}, and \textup{(ii)} implies \textup{(i)}
because $\widehat T$ is unbiased.  It remains to prove that
\textup{(i)} implies \textup{(iii)}.

Suppose that \textup{(i)} holds.  Use the notation and the uniform vertex
prior from the proof of Theorem~\ref{thm:lower}, and recall that $\bar R(\delta,p)$ denotes the Bayes risk under the uniform prior on the
vertices.  Since $\sup_{y\in\Theta}R(\delta,p;y)=D_\pi$, we have $
  \bar R(\delta,p)\leq D_\pi$. 
By the vertex-prior argument in the proof of Theorem~\ref{thm:lower}, $\bar R(\delta,p)\geq D_\pi$.  Hence
\begin{equation}\label{e:2}
  \bar R(\delta,p)=D_\pi .                   
\end{equation}

In the proof of Theorem~\ref{thm:lower}, we showed
\[
  \bar R(\delta,p)
  =
  \|D\|_2^2-\|F\|_2^2
  =
  \|D\|_2^2-\sum_{i=1}^Nr_i^2,
\]
and also
\[
  \langle D,U_i\rangle=r_i,
  \qquad
  \|U_i\|_2^2=\pi_i,
  \qquad
  \langle U_i,U_j\rangle=0 \quad (i\ne j).
\]
Let $V=\operatorname{span}\{U_1,\ldots,U_N\}$.  Since the $U_i$ are
orthogonal, the orthogonal projection of $D$ onto $V$ is
\[
  P_VD
  =
  \sum_{i=1}^N
  \frac{\langle D,U_i\rangle}{\|U_i\|_2^2}U_i
  =
  \sum_{i=1}^N\frac{r_i}{\pi_i}U_i .
\]
Therefore
\[
  \|P_VD\|_2^2
  =
  \sum_{i=1}^N\frac{r_i^2}{\pi_i}.
\]
On the other hand, by \eqref{e:2} and the definition of $D_\pi$,
\[
  \|D\|_2^2
  =
  \bar R(\delta,p)+\sum_{i=1}^Nr_i^2
  =
  \sum_{i=1}^Nr_i^2\left(\frac{1-\pi_i}{\pi_i}\right)+\sum_{i=1}^Nr_i^2
  =
  \sum_{i=1}^N\frac{r_i^2}{\pi_i}.
\]
Thus
\[
  \|D\|_2^2=\|P_VD\|_2^2.
\]
Since $P_VD$ is the orthogonal projection of $D$ onto $V$, this forces $D=P_VD$. Hence, for every sample $s$ with $p(s)>0$ and every sign vector
$\varepsilon$,
\[
  D_s(\varepsilon_s)
  =
  \sum_{i\in s}\frac{r_i}{\pi_i}\varepsilon_i .
\]
Equivalently, at every vertex $y=m+r\varepsilon$ and every sample $s$ with
$p(s)>0$,
\[
  \delta_s(y_s)
  =
  \sum_{i=1}^N m_i +\sum_{i\in s}\frac{r_i}{\pi_i}\varepsilon_i
  =
  \sum_{i=1}^N m_i +\sum_{i\in s}\frac{y_i-m_i}{\pi_i}
  =
  \widehat T(y_s).
\]
Therefore
\[
  R(\delta,p;m+r\varepsilon)
  =
  R(\widehat T,p;m+r\varepsilon)
\]
for every vertex $m+r\varepsilon$.

Since for each vertex, the  risk of $\delta$ is at most $D_\pi$, and the average risk over
the vertices is $\bar R(\delta,p)=D_\pi$, each vertex risk must in fact
equal $D_\pi$.  Hence
\[
  R(\widehat T,p;m+r\varepsilon)=D_\pi
\]
for every $\varepsilon\in\{-1,1\}^N$.

For the midpoint-differenced estimator,
\[
  \widehat T-T(y)
  =
  \sum_{i=1}^N\left(\frac{I_i}{\pi_i}-1\right)(y_i-m_i).
\]
Thus, at the vertex $y=m+r\varepsilon$,
\[
  R(\widehat T,p;m+r\varepsilon)
  =
  D_\pi
  +
  2\sum_{i<j}
  \frac{\Delta_{ij}}{\pi_i\pi_j}r_ir_j
  \varepsilon_i\varepsilon_j .
\]
Since this equals $D_\pi$ for every sign vector $\varepsilon$,
\[
  \sum_{i<j}
  \frac{\Delta_{ij}}{\pi_i\pi_j}r_ir_j
  \varepsilon_i\varepsilon_j
  =
  0
  \qquad
  \text{for every }\varepsilon\in\{-1,1\}^N.
\]
By orthogonality of the Walsh functions
$\{\varepsilon_i\varepsilon_j:i<j\}$, every coefficient in this expansion
is zero.  Therefore
\[
  \frac{\Delta_{ij}}{\pi_i\pi_j}r_ir_j=0
  \qquad
  \text{for every }i<j.
\]
Because $r_i>0$ and $\pi_i>0$ for all $i$, it follows that
$\Delta_{ij}=0$ for every $i\ne j$.  This proves \textup{(iii)}.
\end{proof}

Finally, we give the proof of our third main theorem.

\begin{proof}[Proof of Theorem~\ref{thm:design}]
By Theorem~\ref{thm:lower}, every admissible design--estimator pair satisfies
\[
  \sup_{y\in\Theta}R(\delta,p;y)
  \geq
  D_\pi.
\]
Further, every  design satisfying the constraint $\E_p[|S|] \leq n$ yields a vector $\pi$ of inclusion probabilities satisfying
\begin{equation}\label{e:domain}
  0<\pi_i\leq 1,
  \qquad
  \sum_{i=1}^N\pi_i\leq n.
\end{equation}
Consequently, to obtain a universal lower bound, it suffices to minimize
$D_\pi$ over this set of vectors.  Since
\[
  D_\pi
  =
  \sum_{i=1}^N\frac{r_i^2}{\pi_i}-\sum_{i=1}^N r_i^2,
\]
this is equivalent to minimizing
\[
  f(\pi)=\sum_{i=1}^N\frac{r_i^2}{\pi_i}.
\]

Let $c>0$ be the unique solution of
\begin{equation}
  \sum_{i=1}^N \min(1,cr_i)=n.
  \label{eq:design-waterfill-equation}
\end{equation}
Indeed, the function
\[
  H(c)=\sum_{i=1}^N\min(1,cr_i)
\]
is continuous, satisfies $H(0)=0$ and $\lim_{c\to\infty}H(c)=N$, and is
strictly increasing on the set where $H(c)<N$.  Since $0<n<N$,
\eqref{eq:design-waterfill-equation} has a unique solution.  Define
\[
  \pi_i^*=\min(1,cr_i),
  \qquad
  \lambda=c^{-2}.
\]
Then $\pi^*$ satisfies \eqref{e:domain}.

We claim that $\pi^*$ minimizes $f$.  Let $\pi$ be any vector satisfying \eqref{e:domain}.  We first prove, for each $i$, that
\begin{equation}
  \frac{r_i^2}{\pi_i}
  -
  \frac{r_i^2}{\pi_i^*}
  \geq
  -\lambda(\pi_i-\pi_i^*).
  \label{eq:design-elementary-support}
\end{equation}
We use the elementary identity
\begin{equation}\label{e:elementary}
  \frac{r_i^2}{x}
  -
  \frac{r_i^2}{a}
  +
  \frac{r_i^2}{a^2}(x-a)
  =
  \frac{r_i^2(x-a)^2}{a^2x}
  \geq 0,
  \qquad x>0,\ a>0.
\end{equation}
If $\pi_i^*<1$, then $\pi_i^*=cr_i$, and hence
\[
  \frac{r_i^2}{(\pi_i^*)^2}
  =
  c^{-2}
  =
  \lambda.
\]
Applying \eqref{e:elementary} with $x=\pi_i$ and $a=\pi_i^*$ gives
\eqref{eq:design-elementary-support}.

If $\pi_i^*=1$, then $cr_i\geq1$, so $r_i^2\geq c^{-2}=\lambda$.
Applying \eqref{e:elementary} with $x=\pi_i$ and $a=1$ gives
\[
  \frac{r_i^2}{\pi_i}-r_i^2
  \geq
  -r_i^2(\pi_i-1).
\]
Since $\pi_i\leq1$ and $r_i^2\geq\lambda$,
\[
  -r_i^2(\pi_i-1)
  \geq
  -\lambda(\pi_i-1).
\]
Thus \eqref{eq:design-elementary-support} also holds in the case $\pi_i^*=1$.

Summing \eqref{eq:design-elementary-support} over $i$ yields
\[
  f(\pi)-f(\pi^*)
  \geq
  -\lambda\sum_{i=1}^N(\pi_i-\pi_i^*)
  =
  \lambda\left(n-\sum_{i=1}^N\pi_i\right)
  \geq 0.
\]
Therefore $\pi^*$ minimizes $f$, and hence minimizes $D_\pi$.  Moreover,
$f$ is strictly convex, since each function
$x\mapsto r_i^2/x$ is strictly convex for $r_i>0$.  The feasible set is
convex, so this minimizer is unique.

By the definition of $V_n$, $
  D_{\pi^*}=V_n$. 
Consequently, every admissible design--estimator pair satisfies
\[
  \sup_{y\in\Theta}R(\delta,p;y)
  \geq
  D_\pi
  \geq
  D_{\pi^*}
  =
  V_n.
\]

It remains to show that independent
sampling with inclusion probabilities $\pi_i^*$ attains equality.  We have 
\[
  \E_p \big[ |S| \big]=\sum_{i=1}^N\pi_i^*=n,
\]
and the inclusion indicators are pairwise independent, so
\[
  \Delta_{ij}=0
  \qquad
  \text{for all }i\ne j.
\]
For this design, the midpoint-differenced estimator $\widehat T$ 
is unbiased, and Proposition~\ref{thm:pairwise} gives
\[
  \sup_{y\in\Theta}R(\widehat T ,p;y)
  =
  D_{\pi^*}
  =
  V_n.
\]
This concludes the proof.
\end{proof}

We now turn to the proofs of our two supplementary propositions. 

\begin{proof}[Proof of Proposition~\ref{p:admissible}]
It suffices to rule out domination by another unbiased estimator. Suppose, toward a contradiction, that there exists an unbiased estimator $\delta$ such that
\[
R(\delta,p;y)\le R(\widehat T,p;y)
\qquad\text{for all }y\in\Theta,
\]
with strict inequality at some point $y^0\in\Theta$.

Choose a product probability measure $
\mu=\mu_1\otimes\cdots\otimes\mu_N$ 
on $\Theta$ such that each $\mu_i$ has mean $m_i$, has positive variance
\[
\sigma_i^2=\int (y_i-m_i)^2\,d\mu_i(y_i)>0,
\]
and gives positive mass to $y_i^0$. This is possible since $m_i$ lies in the interior of $[a_i,b_i]$. Then $\mu(\{y^0\})>0$.

Let $Y\sim\mu$, independently of the sample $S\sim p$, and write
\[
Z_i=Y_i-m_i,
\qquad
F=T(Y)-\sum_{i=1}^N m_i =\sum_{i=1}^N Z_i,
\qquad
D=\delta_S(Y_S)-M .
\]
Since $\delta$ is unbiased, $
\E_p[D\mid Y]=F$. 
Further, by the definitions of $D$ and $F$,
\[
D-F=\delta_S(Y_S)-T(Y).
\]
Therefore, conditioning on $Y=y$ and then integrating over $\mu$,
\[
\int_\Theta R(\delta,p;y)\,d\mu(y)
=
\E[(D-F)^2].
\]

We now prove a lower bound for this quantity. Recall that we set $ 
I_i=\mathbf 1_{\{i\in S\}}$, and define $
U_i=I_iZ_i$. 
The random variables $U_i$ are pairwise orthogonal in $L^2$, because the coordinates of $Y$ are independent and centered. Moreover
\[
\|U_i\|_2^2=\E[I_iZ_i^2]=\pi_i\sigma_i^2 .
\]
Also,
\[
\begin{aligned}
\langle D,U_i\rangle
&=\E\!\left[D I_i Z_i\right]  \\
&=\E\!\left[
Z_i\sum_{s\ni i}p(s)\bigl(\delta_s(Y_s)-M\bigr)
\right].
\end{aligned}
\]
The terms with $s\not\ni i$ may be inserted into the last sum, since for such $s$ the quantity
$\delta_s(Y_s)-M$ is independent of $Z_i$ and $\E [Z_i]=0$. Hence
\[
\langle D,U_i\rangle
=
\E\!\left[
Z_i\sum_s p(s)\big(\delta_s(Y_s)-M\big)
\right]  
=\E[Z_iF]
=\sigma_i^2 .
\]
By Bessel's inequality,
\[
\E[D^2]
\ge
\sum_{i=1}^N
\frac{\langle D,U_i\rangle^2}{\|U_i\|_2^2}
=
\sum_{i=1}^N\frac{\sigma_i^2}{\pi_i}.
\]
Since $\E[D F]=\E[F\,\E_p(D\mid Y)]=\E[F^2]$ and
\[
\E[F^2]=\sum_{i=1}^N\sigma_i^2,
\]
we obtain
\[
\int_\Theta R(\delta,p;y)\,d\mu(y)
=
\E[(D-F)^2]
\ge
\sum_{i=1}^N\sigma_i^2\left(\frac1{\pi_i}-1\right).
\]

The midpoint-differenced estimator $\widehat T$ attains this bound. Indeed,
\[
\widehat T- \sum_{i=1}^N m_i
=
\sum_{i=1}^N \frac{I_iZ_i}{\pi_i},
\]
so
\[
\widehat T-T(Y)
=
\sum_{i=1}^N\left(\frac{I_i}{\pi_i}-1\right)Z_i .
\]
Again the cross terms vanish under the centered product prior, and therefore
\[
\int_\Theta R(\widehat T,p;y)\,d\mu(y)
=
\sum_{i=1}^N
\E\!\left[\left(\frac{I_i}{\pi_i}-1\right)^2\right]\sigma_i^2
=
\sum_{i=1}^N\sigma_i^2\left(\frac1{\pi_i}-1\right).
\]
Thus $\widehat T$ minimizes $\mu$-Bayes risk among unbiased estimators.

But $\delta$ was assumed to weakly dominate $\widehat T$ everywhere and to improve it strictly at $y^0$. Since $\mu(\{y^0\})>0$, this would imply
\[
\int_\Theta R(\delta,p;y)\,d\mu(y)
<
\int_\Theta R(\widehat T,p;y)\,d\mu(y),
\]
contradicting the preceding Bayes-risk minimality. Therefore no unbiased estimator dominates $\widehat T$, and $\widehat T$ is admissible in the class of unbiased estimators.
\end{proof}

\begin{proof}[Proof of Proposition~\ref{p:equivariant}]
Recall that $m_i = (a_i + b_i)/2$. In the transformed problem, the
corresponding midpoint is
\[
m_i^{\lambda,c}
=
\frac{(\lambda a_i+c_i)+(\lambda b_i+c_i)}2
=
\lambda m_i+c_i.
\]
Therefore, for any sample $s$,
\[
\begin{aligned}
\widehat T^{\lambda,c}_s(y_s^{\lambda,c})
&=
\sum_{i=1}^N m_i^{\lambda,c}
+
\sum_{i\in s}
\frac{y_i^{\lambda,c}-m_i^{\lambda,c}}{\pi_i} \\
&=
\sum_{i=1}^N(\lambda m_i+c_i)
+
\sum_{i\in s}
\frac{(\lambda y_i+c_i)-(\lambda m_i+c_i)}{\pi_i} \\
&=
\lambda\sum_{i=1}^N m_i+\sum_{i=1}^N c_i
+
\lambda\sum_{i\in s}\frac{y_i-m_i}{\pi_i} \\
&=
\lambda\widehat T_s(y_s)+\sum_{i=1}^N c_i.
\end{aligned}
\]
This is precisely the affine transformation rule induced on the total, since
\[
T(y^{\lambda,c})
=
\sum_{i=1}^N(\lambda y_i+c_i)
=
\lambda T(y)+\sum_{i=1}^N c_i.
\]
Hence the differenced estimator transforms in the same way as the estimand, which
is the claimed equivariance.
\end{proof}

\section*{Declaration of generative AI and AI-assisted technologies}
During preparation of this manuscript, the authors used ChatGPT (GPT 5.5-Pro) to assist
with drafting, editing, and checking arguments. The authors reviewed and edited the content as necessary and take full responsibility for the content of the publication.

\section*{Acknowledgements}
The authors thank Haoge Chang and Ben Green for helpful conversations.

\bibliographystyle{plainnat}
\bibliography{references}

\end{document}